\let\ifanglais\iftrue
\def\R{{\mathbb R}}
\def\N{{\mathbb N}}
\definecolor{teal}{RGB}{0,128,128}
\newcommand{\htarea}{\operatorname{Area}}
\newcommand{\barea}{\operatorname{Area}^B}
\newcommand{\vol}{\operatorname{Vol}}
\newcommand{\htvol}{\operatorname{Vol^{H}}}
\newcommand{\bvol}{\operatorname{Vol^B}}
\newcommand\cone{L}
\newcommand\intersection{\cap}
\newcommand\bary{T}
\newcommand{\ed}[1]{\textrm{d} #1}
\newcommand{\asympball}{\operatorname{AsB}}
\newcommand{\ball}{B}
\newcommand{\euclidean}{\mathcal{E}}
\newcommand{\interior}{\operatorname{int}}
\newcommand{\assvol}{\operatorname{Asvol}}
\newcommand{\flags}{\operatorname{Flags}}
\newcommand{\thedim}{d}
\newcommand{\dsimplex}{\Sigma}
\newcommand\cardinal[1]{\lvert{#1}\rvert}
\newcommand\flagapprox{a_{\textnormal{f}}}
\newcommand\flagnumber{N_{\textnormal{f}}}
\newcommand\shrunkP{P}
\renewcommand\epsilon{\varepsilon}
\newtheoremstyle{mesthm}
  {10pt plus 1pt minus 1pt}
  {9pt plus 1pt minus 6pt}
  {\slshape}
  {0.5cm}
  {\bfseries}
  {.}
  {1ex}
  {}
\newtheoremstyle{mesdefi}
  {6pt plus 1pt minus 1pt}
  {6pt plus 1pt minus 1pt}
  {}
  {0.5cm}
  {\bfseries}
  {.}
  {1ex}
  {}%
\theoremstyle{mesthm}
\newtheorem{lema}{\ifanglais{\large L}emma\else{\large L}emme\fi}
\newtheorem{theo}[lema]{\ifanglais{\large T}heorem\else {\large
    T}h\'eor\`eme\fi}
\newtheorem*{theo*}{\ifanglais{\large T}heorem\else {\large T}h\'eor\`eme\fi}
\newtheorem{cor}[lema]{{\large C}orollary}
\newtheorem*{cor*}{{\large C}orollary}
\theoremstyle{mesdefi}
\newtheorem{defi}[lema]{\ifanglais{\large D}efinition\else{\large
    D}\'efinition\fi} 
\newtheorem*{defi*}{\ifanglais Definition\else D\'efinition\fi}
\newcounter{step}
\DeclareMathOperator{\ent}{Ent}
\title[Flag-approximability and volume entropy]%
{Flag-approximability of convex bodies \\
and volume growth of Hilbert geometries}
\author[C.~Vernicos and C.~Walsh]{Constantin Vernicos and Cormac Walsh}
\address{%
  Institut Montpellierain Alexander Grothendieck\\ 
  Universit\'e de Montpellier \\
  Case Courrier 051\\
  Place Eug\`ene Bataillon \\
  F--34395 Montpellier Cedex\\ 
  France} 
\email{Constantin.Vernicos@math.univ-montp2.fr}
\address{%
  Inria \\
  CMAP, Ecole polytechnique \\
  CNRS \\
  Universit\'e Paris-Saclay,
  91128 Palaiseau \\
  France}
\email{cormac.walsh@inria.fr}
\subjclass[2010]{53C60 (primary), 53C24, 58B20, 53A20 (secondary).}
\thanks{The authors acknowledge that this material is based upon work
partially supported by the ANR Blanche ``Finsler'' grant}
\begin{document}

\begin{abstract}
We introduce the flag-approximability of a convex body to measure
how easy it is to approximate by polytopes.
We show that the flag-approximability is exactly half
the volume entropy of the Hilbert geometry on the body,
and that both quantities are maximized when the convex body
is a Euclidean ball.

We also compute explicitly the asymptotic volume of a convex polytope,
which allows us to prove that simplices have the least asymptotic volume.
\end{abstract}

\maketitle

\section*{Introduction}

An important problem with many practical applications is to approximate
convex bodies with polytopes that are as simple as possible, in some sense.
Various measures of complexity of a polytope have been considered in the
literature. These include counting the number of vertices, the number of
facets, or even the number of faces~\cite{arya_da_fonseca_mount_journal}.
One could also use, however, the number of \emph{maximal flags}.
Recall that a maximal flag of a $d$-dimensional polytope is
a finite sequence $(f_0, \dots, f_d)$ of faces of the polytope
such that each face $f_i$ has dimension $i$ and is contained in the
boundary of $f_{i+1}$.

Suppose we wish to approximate a convex body $\Omega$ by a polytope
within a Hausdorff distance $\epsilon > 0$.
Let $\flagnumber(\epsilon, \Omega)$ be the least number of maximal flags
over all polytopes satisfying this criterion.
We define the
\textsl{flag approximability} of $\Omega$ to be
\begin{align*}
\flagapprox(\Omega)
    := \liminf_{\epsilon\to 0}
        \frac{\log \flagnumber(\epsilon, \Omega)}{-\log\epsilon}.
\end{align*}
This is analogous to how Schneider and Wieacker~\cite{sw}
defined the (vertex) approximability, where the least number of
vertices was used instead of the least number of maximal flags.

It is not known which if any equalities hold between the vertex, facet, face,
and flag approximabilities. An advantage of using the flag approximability
is that one can prove the following relation to
the \textsl{volume entropy} of the Hilbert metric on the body.

Choose a base point $p$ in the interior of the convex body $\Omega$,
and for each $R>0$ denote by $\ball_\Omega(p, R)$ the closed ball
centered at $p$ of radius $R$ in the Hilbert geometry.
Let $\htvol$ denote the Holmes--Thompson volume.
The (lower) volume entropy of the Hilbert geometry on $\Omega$ is defined to be
\begin{align*}
\ent(\Omega)
    := \liminf_{R\to\infty}
           \frac{\log \htvol\bigl(\ball_\Omega(p, R)\bigr)}{R}.
\end{align*}

Observe that this does not depend on the base point $p$, and moreover
does not change if one takes instead the Busemann volume.
One can also define the upper flag approximability and the upper
volume entropy by taking supremum limits instead of infimum ones.
Although the two entropies do not generally coincide, as shown by the first
author in \cite{ver2014}, all our results and proofs hold
when replacing $\liminf$ with $\limsup$.

\begin{theo}
\label{thm:entropy_flag_approximabilty}
Let $\Omega \subset \R^\thedim$ be a convex body. Then,
\begin{equation*}
\ent(\Omega) = 2\flagapprox(\Omega).
\end{equation*}
\end{theo}

The same result concerning the vertex approximability was proved
by the first author~\cite{ver2014} in dimensions two and three.
In higher dimension, it was shown only that the
volume entropy is greater than or equal to twice the vertex approximability.
The motivation was to try to prove the entropy upper bound conjecture,
which states that the volume entropy of every convex body is no greater than
$d - 1$. This would follow from equality of the two quantities
using the well-known result, proved by Fejes--Toth~\cite{fejes_toth}
in dimension two and by Bronshteyn--Ivanov~\cite{bi} in the general case,
that the (vertex) approximability of any convex body is no greater
than $(\thedim-1)/2$.

We show, using a slight modification of the technique
in Arya--da Fonseca--Mount~\cite{arya_da_fonseca_mount_journal},
that the Bronshteyn--Ivanov bound also holds for the flag approximability.

\begin{theo}
\label{thm:flag_approximability_upper_bound}
Let $\Omega \subset \R^\thedim$ be a convex body. Then
\begin{equation*}
\flagapprox(\Omega) \leq \frac {\thedim - 1} {2}.
\end{equation*}
\end{theo}

This allows us to deduce the entropy upper bound conjecture.
N.~Tholozan has also proved this conjecture recently using a different
method~\cite{tholozan}.

\begin{cor}
\label{cor:general_bound_on_entropy}
Let $\Omega \subset \R^\thedim$ be a convex body. Then
\begin{equation*}
\ent(\Omega) \leq \thedim - 1.
\end{equation*}
\end{cor}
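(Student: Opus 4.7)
The plan is essentially to chain together the two preceding theorems, since this corollary is an immediate consequence of them. Concretely, I would first invoke Theorem \ref{thm:entropy_flag_approximabilty} to rewrite the volume entropy as $\ent(\Omega) = 2\flagapprox(\Omega)$, thereby converting the question about an asymptotic geometric quantity on the Hilbert ball into a question about polytopal approximation. I would then apply Theorem \ref{thm:flag_approximability_upper_bound}, which bounds $\flagapprox(\Omega)$ above by $(\thedim-1)/2$, to conclude that
\begin{equation*}
\ent(\Omega) \;=\; 2\flagapprox(\Omega) \;\leq\; 2 \cdot \frac{\thedim - 1}{2} \;=\; \thedim - 1.
\end{equation*}

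Since both ingredients are already stated and allowed to be used, there is no real obstacle, no case analysis, and no auxiliary construction needed: the corollary is a one-line deduction. The only subtlety worth mentioning is that the same argument applies verbatim to the \emph{upper} volume entropy and \emph{upper} flag approximability (via the $\limsup$ versions of the definitions), as the authors explicitly note that all their results and proofs go through with $\limsup$ in place of $\liminf$. Hence the bound $\ent(\Omega) \leq \thedim - 1$ holds for both the lower and the upper entropy simultaneously.
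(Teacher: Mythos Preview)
Your proposal is correct and matches the paper's approach exactly: the corollary is presented as an immediate consequence of Theorems~\ref{thm:entropy_flag_approximabilty} and~\ref{thm:flag_approximability_upper_bound}, combined precisely as you describe. The paper does not even spell out a separate proof, so your one-line deduction (and the remark about the $\limsup$ version) is entirely in line with the authors' intent.
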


For many Hilbert geometries, such as hyperbolic space, the volume of balls
grows exponentially. However, for some Hilbert geometries, the
volume grows only polynomially. In this case it is useful to make the
following definition.
Fix some notion of volume $\vol$.
The \textsl{asymptotic volume} of the Hilbert geometry on a $d$-dimensional
convex body $\Omega$ is defined to be
\begin{align*}
\assvol(\Omega) := \liminf_{R\to\infty}
                       \frac {\vol(\ball_\Omega(p,R))} {R^\thedim}.
\end{align*}
Note that, unlike in the case of the volume entropy, the asymptotic volume
depends on the choice of volume.
The first author has shown in~\cite{ver2012} that the asymptotic
volume of a convex body is finite if and only if the body is a polytope.

In the next theorem, we again see a connection appearing between
volume in Hilbert geometries and the number of maximal flags.
We denote by $\flags(\mathcal{P})$ the set of maximal flags of a polytope
$\mathcal{P}$.
Let $\dsimplex$ be a simplex of dimension $\thedim$.
Observe that $\flags(\dsimplex)$ consists of $(\thedim+1)!$ elements.

\begin{theo}
\label{thm:poly_volume_growth}
Let $\mathcal{P}$ be a convex polytope of dimension $\thedim$,
and fix some notion of volume $\vol$. Then,
\begin{align*}
\assvol(\mathcal{P})
    = \frac{\cardinal{\flags(\mathcal{P})}}
         {(\thedim+1)!} \assvol(\dsimplex).
\end{align*}
\end{theo}

An immediate consequence is that the simplex has the smallest asymptotic
volume among all convex bodies.
This was conjectured by Vernicos in~\cite{ver2012}.

\begin{cor}
\label{cor:smallest_asymptotic_volume}
Let $\Omega \subset \R^\thedim$ be convex body. Then,
\begin{equation*}
\assvol(\Omega) \geq \assvol(\dsimplex),
\end{equation*}
with equality if and only if $\Omega$ is a simplex.
\end{cor}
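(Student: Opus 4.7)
The plan is to separate the proof into the non-polytope and polytope cases, and in the polytope case to reduce the inequality to a purely combinatorial statement about the number of maximal flags.

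First, I would dispose of the non-polytope case by invoking the cited result of~\cite{ver2012}: the asymptotic volume $\assvol(\Omega)$ is finite if and only if $\Omega$ is a polytope. So if $\Omega$ is not a polytope, $\assvol(\Omega) = +\infty > \assvol(\dsimplex)$, and the strict inequality also rules out equality.

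Next, assume $\Omega = \mathcal{P}$ is a $\thedim$-polytope. By Theorem~\ref{thm:poly_volume_growth}, the corollary reduces to the combinatorial claim
\[
\cardinal{\flags(\mathcal{P})} \geq (\thedim+1)!,
\]
with equality if and only if $\mathcal{P}$ is a simplex. I would prove this by induction on $\thedim$. The case $\thedim = 1$ is immediate, since a $1$-polytope is a segment with exactly two maximal flags. For the inductive step, observe that a maximal flag of $\mathcal{P}$ is uniquely specified by a pair $(F, \phi)$, where $F$ is a facet of $\mathcal{P}$ (the penultimate face) and $\phi$ is a maximal flag of $F$ viewed as a $(\thedim-1)$-polytope; hence
\[
\cardinal{\flags(\mathcal{P})} = \sum_{F \text{ facet of } \mathcal{P}} \cardinal{\flags(F)}.
\]
By the inductive hypothesis $\cardinal{\flags(F)} \geq \thedim!$, and every $\thedim$-polytope has at least $\thedim + 1$ facets, yielding the desired inequality.

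The most delicate point is the equality case: if $\cardinal{\flags(\mathcal{P})} = (\thedim+1)!$, then $\mathcal{P}$ must have exactly $\thedim + 1$ facets, each of which is a $(\thedim-1)$-simplex by the inductive hypothesis. I would then show that $\mathcal{P}$ itself is a $\thedim$-simplex by a vertex count: every vertex of $\mathcal{P}$ lies on at least $\thedim$ facets, and each $\thedim$-subset of the $\thedim + 1$ facets determines at most one vertex, so $\mathcal{P}$ has at most $\thedim + 1$ vertices; combined with the fact that every $\thedim$-polytope has at least $\thedim + 1$ vertices, this forces $\mathcal{P}$ to be a simplex. The only real obstacle is invoking these standard polytope facts cleanly; no serious difficulty is expected.
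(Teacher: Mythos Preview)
Your proposal is correct and follows the same route as the paper: dispose of non-polytopes via~\cite{ver2012}, then invoke Theorem~\ref{thm:poly_volume_growth} to reduce to the combinatorial claim that the simplex minimises $\cardinal{\flags(\mathcal{P})}$ among $\thedim$-polytopes. The paper simply asserts this last fact in one line, whereas you supply an inductive proof via the facet decomposition~(\ref{eqn:flag_facet}); your added detail is sound, though the step ``each $\thedim$-subset of the $\thedim+1$ facets determines at most one vertex'' could be justified more directly by noting that a bounded $\thedim$-polytope with exactly $\thedim+1$ facets is automatically a simplex (e.g.\ by polar duality, or because $\thedim$ of the bounding hyperplanes must meet in a single point lest the intersection be unbounded).
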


Another corollary is the following result, proved originally by Foertsch and
Karlsson~\cite{fk}.

\begin{cor}
\label{cor:normed_implies_simplex}
If a Hilbert geometry on a convex body $\Omega$ is isometric to
a finite-dimensional normed space, then $\Omega$ is a simplex.
\end{cor}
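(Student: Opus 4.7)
The plan is to derive the corollary from Corollary~\ref{cor:smallest_asymptotic_volume}, which characterizes simplices as the unique minimizers of asymptotic volume.

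First, I would show that $\Omega$ must be a polytope. Since the Busemann volume (equivalently, the Holmes--Thompson volume) is an intrinsic invariant of a Finsler structure, an isometry between the Hilbert geometry on $\Omega$ and a $\thedim$-dimensional normed space $V$ forces
\begin{equation*}
\vol(\ball_\Omega(p,R)) \;=\; \vol(\ball_V(0,R)) \;=\; R^\thedim \, \vol(\ball_V(0,1))
\end{equation*}
for every $R>0$, and not merely asymptotically. Hence $\assvol(\Omega) = \vol(\ball_V(0,1))$ is finite, and by the result of the first author in~\cite{ver2012}, $\Omega$ must be a convex polytope.

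Next I would combine Theorem~\ref{thm:poly_volume_growth} and Corollary~\ref{cor:smallest_asymptotic_volume}. Writing $\mathcal{P} = \Omega$, Theorem~\ref{thm:poly_volume_growth} gives $\assvol(\mathcal{P}) = \frac{\cardinal{\flags(\mathcal{P})}}{(\thedim+1)!} \assvol(\dsimplex)$. Since any $\thedim$-polytope has at least $(\thedim+1)!$ maximal flags, with equality precisely for simplices, this reduces the goal to showing $\cardinal{\flags(\Omega)} = (\thedim+1)!$, equivalently $\assvol(\Omega) = \assvol(\dsimplex)$.

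The principal obstacle is this final step: bridging from ``$\Omega$ is a polytope whose Hilbert metric is isometric to a normed space'' to ``$\Omega$ has the minimum number of maximal flags''. My preferred approach is to exploit the transitive group of translations of $V$; pulled back via the hypothesized isometry, it becomes a $\thedim$-dimensional connected group of isometries of the Hilbert geometry on $\Omega$, acting transitively on the interior. Since isometries of a Hilbert geometry are realized by projective transformations preserving $\Omega$, the polytope $\Omega$ must then admit a continuous $\thedim$-dimensional group of projective automorphisms acting transitively on its interior; a standard classification argument identifies the simplex as the only polytope with this property, completing the proof via Corollary~\ref{cor:smallest_asymptotic_volume}.
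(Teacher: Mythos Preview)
You correctly reduce the problem to showing $\assvol(\Omega) = \assvol(\dsimplex)$ and then invoking the equality case of Corollary~\ref{cor:smallest_asymptotic_volume}. However, you miss the simple observation that makes this step immediate, and instead propose a detour that is both harder and problematic.

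The paper works with the Busemann volume. By its very definition, the Busemann volume of the unit ball of \emph{any} $\thedim$-dimensional normed space is $\omega_\thedim$; hence all such spaces have the \emph{same} asymptotic volume, namely $\omega_\thedim$. Now the Hilbert geometry on a simplex $\dsimplex$ is itself isometric to a normed space, so $\assvol(\dsimplex) = \omega_\thedim$; and by hypothesis $(\Omega, d_\Omega)$ is isometric to a normed space, so $\assvol(\Omega) = \omega_\thedim$ as well. This gives $\assvol(\Omega) = \assvol(\dsimplex)$ directly, and Corollary~\ref{cor:smallest_asymptotic_volume} finishes the proof. No group theory is needed, nor is your preliminary reduction to polytopes.

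Two further remarks. First, the parenthetical ``(equivalently, the Holmes--Thompson volume)'' is incorrect: the two volumes do not agree for general Finsler metrics, and the argument above genuinely uses the Busemann normalisation. Second, your proposed final step is problematic. The assertion that every isometry of a Hilbert geometry is projective is a substantial theorem, and for polytopal domains it is known to fail \emph{precisely} when the polytope is a simplex. One could try to argue by contradiction---assume $\Omega$ is a non-simplex polytope, invoke the Lemmens--Walsh result to make all isometries projective, then classify---but this imports heavy external machinery, the ``standard classification argument'' you allude to is neither standard nor obvious, and in any case such a group-theoretic step would prove $\Omega$ is a simplex outright, rendering the asymptotic-volume apparatus (and Corollary~\ref{cor:smallest_asymptotic_volume}) entirely redundant.
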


\section{Preliminaries}
\label{sec:preliminaries}

A \textsl{proper} open set in $\R^\thedim$ is an open set not containing a whole line.
A non-empty proper open convex set will be called a \textsl{convex domain}.
The closure of a bounded convex domain is called a \textsl{convex body}.

\subsection{Hilbert geometries}
A Hilbert geometry
$(\Omega,d_\Omega)$ is a convex domain $\Omega$ in $\R^\thedim$  with
the Hilbert distance $d_\Omega$ defined as follows.
For any distinct points $p$ and $q$ in $\Omega$,
the line passing through $p$ and $q$ meets the boundary $\partial \Omega$
of $\Omega$ at two points $a$ and $b$, labeled so that the line
passes consecutively through $a$, $p$, $q$, and $b$. We define
\begin{align*}
d_{\Omega}(p,q) := \frac{1}{2} \log [a,p,q,b],
\end{align*}
where $[a,p,q,b]$ is the cross ratio of $(a,p,q,b)$, that is, 
\begin{align*}
[a,p,q,b] := \frac{|qa|}{|pa|} \frac{|pb|}{|qb|} > 1,
\end{align*}
with $|xy|$ denoting the Euclidean distance between $x$ and $y$
in $\R^\thedim$.  If either $a$ or $b$ is at infinity, the corresponding ratio
is taken to be $1$. 

Note that the invariance of the cross ratio by a projective map implies
the invariance of $d_{\Omega}$ by such a map. In particular, since any convex
domain is projectively equivalent to a bounded convex domain, most
of our proofs will reduce to that case without loss of generality.

\subsection{The Holmes--Thompson and Busemann volumes}

Hilbert geometries are naturally endowed with
a $C^0$ Finsler metric $F_\Omega$ as follows.
If $p \in \Omega$ and $v \in T_{p}\Omega =\R^\thedim$
with $v \neq 0$, the straight line passing through $p$ and directed by $v$
meets $\partial \Omega$ at two points $p_\Omega^{+}$ and
$p_\Omega^{-}$~. Let $t^+$ and $t^-$ be two positive numbers such
that $p+t^+v=p_\Omega^{+}$ and $p-t^-v=p_\Omega^{-}$.
These numbers correspond to the time necessary to reach the boundary starting
at $p$ with velocities $v$ and $-v$, respectively. We define
\begin{align*}
F_\Omega(p,v) := \frac{1}{2} \biggl(\frac{1}{t^+} + \frac{1}{t^-}\biggr)
\quad \textrm{and} \quad F_\Omega(p , 0) := 0.
\end{align*}
Should $p_\Omega^{+}$ or $p_\Omega^{-}$ be at infinity,
the corresponding ratio will be taken to be $0$.


The Hilbert distance $d_\Omega$ is the distance induced by $F_\Omega$.
We shall denote by $B_\Omega(p,r)$ the metric ball of radius $r$
centered at the point $p\in \Omega$, and by $S_\Omega(p,r)$ the corresponding
metric sphere.

From the Finsler metric, we can construct two important Borel measures on
$\Omega$. 

The first is called the \textsl{Busemann} volume and is denoted by
$\bvol_\Omega$. It is actually the Hausdorff measure associated to
the metric space $(\Omega, d_\Omega)$; see Burago-Burago-Ivanov~\cite{bbi}, example~5.5.13.
It is defined as follows. For any $p \in \Omega$, let
\begin{align*}
\beta_\Omega(p) := \{v \in \R^\thedim ~|~ F_{\Omega}(p,v) < 1 \}
\end{align*}
be the open unit ball in $T_{p}\Omega = \R^\thedim$
of the norm $F_{\Omega}(p,\cdot)$,
and let $\omega_{\thedim}$ be the Euclidean volume of the open unit ball
of the standard Euclidean space $\R^\thedim$.
Consider the (density) function $h^B_\Omega \colon \Omega \longrightarrow \R$
given by
$h^B_\Omega(p)
    := \omega_{\thedim}/\operatorname{Leb}\bigl[\beta_\Omega(p)\bigr]$,
where $\operatorname{Leb}$ is the canonical Lebesgue measure
of $\R^\thedim$, equal to $1$ on the unit ``hypercube''.
Then for any Borel set $A$ in $\Omega$,
\begin{align*}
\bvol_\Omega(A) := \int_{A} h^B_\Omega(p) \,\ed{\operatorname{Leb}(p)}.
\end{align*}

The second, called the \textsl{Holmes--Thompson} volume, will be denoted
by $\htvol_{\Omega}$, and is defined as follows.
Let $\beta_\Omega^*(p)$ be the polar dual of $\beta_\Omega(p)$,
and let $h^H_{\Omega}\colon\Omega \longrightarrow \R$ be the density defined by
$h^H_{\Omega}(p)
    := \operatorname{Leb}\bigl[\beta_\Omega^*(p)\bigr]/\omega_\thedim$.
Then $\htvol_{\Omega}$ is the measure associated to this density.

In what follows, we will denote by $\htarea_{\Omega}$ and $\barea_\Omega$,
respectively, the $\thedim-1$-dimensional measures associated
to the Holmes--Thompson and Busemann measures.

\begin{lema}[Monotonicity of the Holmes--Thompson area]
\label{l:htmonotonicity}

Let $(\Omega, d_\Omega)$ be a Hilbert geometry in $\R^\thedim$.
The Holmes--Thompson area measure is
monotonic on the set of convex bodies in $\Omega$, that is, 
for any pair of convex bodies $K_1$ and $K_2$ in $\Omega$,
such that $K_1\subset K_2$, one has
\begin{equation}
\label{htmonotonicity}
\htarea_{\Omega}(\partial K_1) \leq \htarea_{\Omega}(\partial K_2).
\end{equation}
\end{lema}

\begin{proof}
If $\partial\Omega$ is $C^2$ with everywhere positive Gaussian curvature,
then the tangent unit spheres of the Finsler metric are quadratically convex.
According to \'Alvarez Paiva and
Fernandes~\cite[Theorem 1.1 and Remark 2]{alvarez_fernandez},
there exists a Crofton formula for the Holmes--Thompson area,
from which inequality~(\ref{htmonotonicity}) follows.
Such smooth convex bodies are dense in the set of all convex bodies
in the Hausdorff topology. By approximation,
it follows that inequality~(\ref{htmonotonicity}) is valid for any $\Omega$.
\end{proof}

The next result was essentially proved
by Berck-Bernig-Vernicos in~\cite[Lemma~2.13]{berck_Bernig_Vernicos}.

\begin{lema}[Co-area inequalities]
\label{l:co-area_with_cones}
Let $\Omega$ be a Hilbert geometry, with base point $o$, and let $\cone$ be a 
cone with apex $o$.
Then, for some constant $C>1$ depending only on the dimension~$\thedim$,
\begin{align*}
\frac{1}{C} \barea \bigl(S(R)\intersection\cone \bigr)
    \leq \frac{d}{d R} \bvol \bigl(B(R)\intersection\cone \bigr)
    \leq C \barea \bigl(S(R)\intersection\cone \bigr),
\end{align*}
for all $R\geq 0$.
\end{lema}

The results presented in this paper are actually mostly independent
of the definition of volume chosen; what really matters is that the volume one
uses  satisfies the following properties: 
continuity with respect to the Hausdorff pointed
topology, monotony with respect to inclusion, and invariance under
projective transformations. As a normalisation,
we need that the volume coincides with the standard one in
the case of an ellipsoid (see Vernicos~\cite{ver2014} for more details).

\subsection{Asymptotic balls}

Let $\Omega$ be a bounded open convex set.
For each $R\geq0$ and $y \in \Omega$, we call the dilation of $\Omega$
about $y$ by a factor $1-\exp(-2R)$ the \textsl{asymptotic ball}
of radius $R$ about $y$,
and we denote it by
\begin{align*}
\asympball_\Omega(y,R) := y + (1 - e^{-2R}) (\Omega - y).
\end{align*}
Some authors dilate by a factor $\tanh R$ instead,
but there is very little difference when $R$ is large.
By convention, we take $\asympball_\Omega(y,R)$ to be empty if $R\leq0$.
When there is no ambiguity, we sometimes omit mention of $\Omega$ or $y$
when denoting a ball or asymptotic ball.

The following lemma shows the close connection between asymptotic balls
and the balls of the Hilbert geometry.

\begin{lema}
\label{lem:asympotic_ball}
Let $\Omega$ be a bounded open convex set, containing a point $y$.
Assume that $\Omega$ contains the Euclidean ball of radius $l>0$ about $y$,
and is contained in the Euclidean ball of radius $L>0$ about $y$.
Then for all $R>0$ we have
\begin{equation*}
\asympball_\Omega \Bigl(y, R - \frac{1}{2}\log\bigl(1 + \frac{L}{l}\bigr) \Bigr)
    \subset \ball_\Omega(y,R)
    \subset \asympball_\Omega(y,R).
\end{equation*}
\end{lema}

\begin{proof}
Let $x\in\Omega$, and let $w$ and $z$ be the points in the boundary of $\Omega$
that are collinear with $x$ and $y$, labelled so that $w$, $x$, $y$, and $z$
lie in this order. Observe that $|xy| \leq L$ and $|yz| \geq l$.
Therefore,
\begin{align*}
1 \leq \frac{|xz|}{|yz|}
   = 1 + \frac{|xy|}{|yz|}
   \leq 1 + \frac{L}{l}.
\end{align*}
The point $x$ is in the ball $\ball_\Omega(y,R)$ if and only if
\begin{align*}
\log\frac{|wy|}{|wx|}\frac{|xz|}{|yz|} \le 2R,
\end{align*}
and is in the asymptotic ball $\asympball_\Omega(y,R)$ if and only if
\begin{align*}
\log\frac{|wy|}{|wx|} \le 2R.
\end{align*}
The result follows easily.
\end{proof}

Recall that the L\"owner--John ellipsoid of $\Omega$
is the unique ellipsoid of minimal volume containing $\Omega$.
By performing affine transformations, we may assume without loss of generality
that the L\"owner ellipsoid of $\Omega$ is the Euclidean unit ball
$\euclidean$.
It is known that $(1/\thedim)\euclidean$ is then contained in $\Omega$, that is,
\begin{equation*}
\frac{1}{\thedim}\euclidean\subset\Omega\subset\euclidean.
\end{equation*}
Thus, in this case the assumptions of Lemma~\ref{lem:asympotic_ball}
are satisfied with $L = 1$ and $l = 1/\thedim$.
A convex body will be said to be in \textsl{canonical form} if
its L\"owner--John ellipsoid is the unit Euclidean ball.

\section{Asymptotic volume and Flags}

\begin{quote}
In this section, we prove the study the asympototic volume of polytopes.
Our technique is to decompose the polytope into \textsl{flag simplices}.
We show that the asympototic volume of a flag simplex is independent of
the shape of the polytope, and depends only on the dimension.
Since there is one flag simplex for every maximal flag of the polytope,
our formula follows.
\end{quote}

\subsection{Flags and flag simplices}

Recall that to a closed convex set $K\subset \R^\thedim$  we can associate
an equivalence relation, where two points $a$ and $b$ are equivalent
if they are equal or if there exists an open segment $(c,d)\subset K$
containing the closed segment $[a,b]$.
The equivalence classes are called \textsl{faces}.
A face is called a \textsl{$k$-face} if the dimension
of its affine hull, that is, the smallest affine set containing it, is $k$.

A $0$-face is usually called an \textsl{extremal point},
or, in the case of convex polytopes, a \textsl{vertex}.
A \textsl{facet} is the relative closure of a face of co-dimension~1.

Thus defined, each face is an open set in its affine hull.
For instance, the segment $[a,b]$ in $\R$ admits three faces,
namely $\{a\}$, $\{b\}$, and the open segment $(a,b)$.
Notice that if  $K$ has non-empty interior, that is, if
$K\setminus\partial K\neq \emptyset$,
then its $\thedim$-dimensional face is its interior.

When a face $f$ is in the relative boundary of another face $F$, we write $f<F$.

\begin{defi}[Flag]
Let $\mathcal{P}$ be a closed convex $\thedim$-dimensional polytope.
A \textsl{maximal flag} of $\mathcal{P}$ is a $(\thedim + 1)$-tuple
$(f_0, ..., f_{\thedim})$ of faces of $\mathcal{P}$
such that each $f_i$ has dimension $i$,
and $f_0 < \dots < f_{\thedim}$.
\end{defi}

We denote by $\flags(\mathcal{P})$ the set of maximal flags of a polytope
$\mathcal{P}$.
We use $\lvert\cdot\rvert$ to denote the number of elements in a finite set.
The following formula will be useful. Let $\{F_i\}$ be the set of facets
of $\mathcal{P}$. So, each $F_i$ is a polytope of dimension $d - 1$.
We have that
\begin{align}
\label{eqn:flag_facet}
|\flags(\mathcal{P})| = \sum_i |\flags(F_i)|.
\end{align}

In this paper, a \textsl{simplex} in $\R^\thedim$ is the convex hull
of $\thedim+1$ projectively independent points, that is, a triangle in $\R^2$,
a tetrahedron in $\R^3$, and so forth.
If $\dsimplex$ is a simplex of dimension $\thedim$,
then $\flags(\dsimplex)$ consists of $(\thedim+1)!$ elements.

\begin{defi}[Flag simplex]
A simplex $\mathcal{S}$ is a \textsl{flag simplex} of a polytope $\mathcal{P}$
if there is a maximal flag $(f_0, ..., f_{\thedim})$ of $\mathcal{P}$
such that each of the faces $f_i$ contains exactly one vertex of $\mathcal{S}$.
\end{defi}

Let $\mathcal{P}$ be a convex polytope. Suppose that for each face of
$\mathcal{P}$ we are given a point in the face. Then, associated to each
maximal flag there is a flag simplex of $\mathcal{P}$, obtained by taking
the convex hull of the corresponding points.
Moreover, these flag simplices form a simplicial complex,
and their union is equal to $\mathcal{P}$.
We call this a \textsl{flag decomposition} of $\mathcal{P}$.
If each point is the barycenter of its respective face,
then the resulting flag decomposition
is just the well known \textsl{barycentric decomposition}.

\subsection{Flag simplices of simplices}

\begin{lema}
\label{lem:move_flags}
Let $T$ and $S$ be flag simplices of a $\thedim$-dimensional simplex
$\dsimplex$. Then, there exists a projective linear map $\phi$ leaving
$\dsimplex$ invariant, such that $\phi(T) \subset S$.
\end{lema}

\begin{proof}
We use induction on the dimension.
The induction hypothesis is that if $T$ and $S$ are flag simplices of
a $\thedim$-dimensional simplex $\dsimplex$, and $\{p_i\}$ is a finite
set of points in the interior of $\dsimplex$,
then there exists a projective linear map $\phi$ leaving
$\dsimplex$ invariant, such that the $\phi(T) \subset S$,
and the points $\{\phi(p_i)$\} are all contained in the interior of $S$.

The hypothesis  is clearly true in dimension $1$, since in this case
$\dsimplex$ is a closed interval, the flag simplices are closed segments
in $\dsimplex$ having one endpoint that co-incides with an endpoint of
$\dsimplex$ and the other in the interior, and the group of projective
linear maps is a one-parameter family that acts transitively on the interior
of $\dsimplex$.

Assume the hypothesis is true in dimension $d$, let $T$ and $S$ be
flag simplices of a $\thedim + 1$-dimensional simplex $\dsimplex$,
and let $\{p_i\}$ be a finite subset of the interior of $\dsimplex$.
Since the group of projetive linear maps acts transitively on the facets
of $\dsimplex$, we may assume that the flags associated to, respectively,
$T$ and $S$ have, as their facet, the same facet $F$ of $\dsimplex$.

\newcommand\union{\cup}
\newcommand\after{\circ}
Let $v$ be the vertex of $\dsimplex$ not contained in $F$,
and let $x$ be the vertex of $T$ not contained in $F$.
Project the points $\{p_i\}$ onto $F$ along rays emanating from $v$,
to get a set of points $\{q_i\}$. Project $x$ in the same way to get
a point $y$. By the induction hypothesis, there exists
a projective linear map $\phi_0$ on $F$ such that
$\phi_0(T \intersection F) \subset S\intersection F$,
and the point $y$ and all the points $\{q_i\}$
are mapped by $\phi_0$ into the relative interior of $S \intersection F$.
We can extend $\phi_0$ to a projective linear map on the whole of $\dsimplex$,
which we denote again by $\phi_0$.

There exists a $1$-parameter family of projective linear maps that fix $F$
and $v$. Amongs these maps, we can find one that maps
$x$ as close as we wish to $y$, and each of the points in $\{p_i\}$
as close as we wish to the corresponding point in $\{q_i\}$.
We choose such a map $\phi_1$ so that the image of $x$
and of each of the points $\{p_i\}$ is
in the interior of $\phi_0^{-1}(S)$.
So, the map $\phi := \phi_0 \after \phi_1$ maps $x$ and each of the points
$\{p_j\}$ into the interior of $S$. Since $T$ is the convex hull of $x$ and
$T \intersection F$, we have that $\phi(T) \subset S$.
\end{proof}

\begin{lema}
\label{lem:flag_simplex_in_simplex}
Consider the Hilbert geometry on a $\thedim$-dimensional simplex $\dsimplex$.
Let $S$ be a flag simplex of $\dsimplex$. Then for any $z$ in $\dsimplex$,
\begin{equation*}
\lim_{R\to\infty} \frac{1}{R^\thedim}
           \vol\bigl(\asympball(z,R) \intersection S\bigr)
   = \frac{1}{(\thedim+1)!} \assvol(\dsimplex).
\end{equation*}
\end{lema}

\begin{proof}
Because all simplices of the same dimension are affinely equivalent,
we may assume that ${\dsimplex}$ is a regular simplex with the origin $o$
as its barycenter.

Let $\bary$ be a barycentric flag simplex of ${\dsimplex}$.

A projective linear map leaving ${\dsimplex}$ invariant is an isometry
of the Hilbert metric on ${\dsimplex}$, and therefore preserves volume.
Combining this with the fact that
\begin{equation}
\label{eqn:nested_balls}
\ball\bigl(x,R-d(x,y)\bigr)
    \subset \ball(y,R)
    \subset \ball\bigl(x,R+d(x,y)\bigr),
\end{equation}
for any points $x,y\in {\dsimplex}$ and $R>0$, we get
\begin{equation}
\label{eqn:proh_no_different}
\lim_{R\to\infty} \frac{1}{R^\thedim}
           \vol\bigl(\ball(o,R) \intersection \phi(\bary)\bigr)
   =
\lim_{R\to\infty} \frac{1}{R^\thedim}
           \vol\bigl(\ball(o,R) \intersection \bary\bigr),
\end{equation}
for any projective linear map $\phi$ leaving ${\dsimplex}$ invariant.

From Lemma~\ref{lem:move_flags}, there exist projective linear maps $\phi_0$
and $\phi_1$ leaving ${\dsimplex}$ invariant,
such that $\phi_0(\bary) \subset S \subset \phi_1(\bary)$.
Combining this with~(\ref{eqn:proh_no_different}), we get
\begin{equation*}
\lim_{R\to\infty} \frac{1}{R^\thedim}
           \vol\bigl(\ball(o,R) \intersection S\bigr)
   =
\lim_{R\to\infty} \frac{1}{R^\thedim}
           \vol\bigl(\ball(o,R) \intersection \bary\bigr).
\end{equation*}

Denote by $\Pi$ the group of permutations of vertices of ${\dsimplex}$.
Observe that $\Pi$ has $(\thedim+1)!$ elements.
The group $\Pi$ acts on ${\dsimplex}$, leaving the center $o$ of
${\dsimplex}$ fixed.
We have that the union of the sets $\{\phi(\bary)\}_{\phi\in\Pi}$
is ${\dsimplex}$, and that the interiors of these sets
are pairwise disjoint. So, by symmetry,
\begin{equation*}
\lim_{R\to\infty} \frac{1}{R^\thedim}
           \vol\bigl(\ball(o,R) \intersection \bary\bigr)
   = \frac{1}{(\thedim+1)!} \assvol(\dsimplex).
\end{equation*}

The last step is to use~(\ref{eqn:nested_balls}) and
Lemma~\ref{lem:asympotic_ball} to get that 
\begin{equation*}
\lim_{R\to\infty} \frac{1}{R^\thedim}
           \vol\bigl(\asympball(z,R) \intersection S\bigr)
    = \lim_{R\to\infty} \frac{1}{R^\thedim}
           \vol\bigl(\ball(o,R) \intersection S\bigr).
\qedhere
\end{equation*}
\end{proof}

\subsection{Flag simplices of polytopes}

\begin{lema}
\label{lem:bounding_simplices}
Let $\mathcal{P}$ be a polytope, and let $S$ be a flag simplex of $\mathcal{P}$.
Then there exist simplices $U$ and $V$ satisfying
$U\subset \mathcal{P}\subset V$
such that $S$ is a flag simplex of both $U$ and of~$V$.
\end{lema}

\begin{proof}
We prove the existence of $U$ by induction on the dimension.
The one dimensional case is trivial, since here $\mathcal{P}$ is already
a simplex. So, assume the result holds in dimension~$\thedim$,
and let $\mathcal{P}$ be $\thedim+1$-dimensional.
Let $p$ be the vertex of $S$ that lies in the relative interior
of $\mathcal{P}$. The remaining vertices of $S$ form a flag simplex $S'$
of a facet of $\mathcal{P}$.
Applying the induction hypothesis, we get a simplex $U'$ contained in this
facet such that $S'$ is a flag simplex of $U'$.
It is not difficult to see that we may perturb $p$ in such a way as
to get a point $p'\in \mathcal{P}$ such that the simplex $U$ formed from $p'$
and $U'$ contains $p$ in its relative interior.
It follows that $U\subset \mathcal{P}$, and that $S$ is a flag simplex of $U$.

We also prove the existence of $V$ by induction on the dimension.
Again, the $1$-dimensional case is trivial.
As before, we assume the result holds in dimension~$\thedim$,
and let $\mathcal{P}$ be $\thedim+1$-dimensional.
Recall that $p$ is the vertex of $S$ that lies in the relative interior
of $\mathcal{P}$, and that the remaining vertices of $S$ form
a flag simplex $S'$ of a facet $F$ of $\mathcal{P}$.
Applying the induction hypothesis, we get a simplex $V'$ containing this
facet such that $S'$ is a flag simplex of $V'$.
Denote by $o$ the vertex of $S$ that is also a vertex of $\mathcal{P}$.
Without loss of generality we may assume that $o$ is the origin of
the vector space $\R^{\thedim+1}$.
Observe that if we multiply the vertices of $V'$ by any scalar $\alpha$ greater
than $1$, then $S'$ remains a flag simplex of $\alpha V'$.
Choose $q\in\R^{\thedim+1}$ and $\alpha>1$ such that
every vertex of $\mathcal{P}$ lies in the convex hull
\newcommand\conv{\operatorname{conv}}
\begin{align*}
V := \conv\{q, \alpha V'\}.
\end{align*}
Then, $\mathcal{P}\subset V$ and $S$ is a flag simplex of $V$.
\end{proof}

\begin{proof}[Proof of Theorem~\ref{thm:poly_volume_growth}]
Choose a flag decomposition of $\mathcal{P}$.
Let $x$ be the vertex that is common to all the flag simplices,
which lies in the interior of $\mathcal{P}$.

Let $S$ be any one of the flag simplices.
By Lemma~\ref{lem:bounding_simplices}, there are simplices $U$ and $V$
satisfying $U\subset \mathcal{P}\subset V$ such that $S$ is a flag simplex
both of $U$ and of~$V$. Hence,
\begin{align}
\label{eqn:volume_inequality}
\vol_U(X) \geq \vol_{\mathcal{P}}(X) \geq \vol_V(X),
\end{align}
for any measurable subset $X$ of the interior of $U$.
Observe that, for any $R>0$,
\begin{align}
\label{eqn:same_asymp_ball_intersection}
\asympball_U(x,R) \intersection S
   = \asympball_{\mathcal{P}}(x,R) \intersection S
   = \asympball_V(x,R) \intersection S.
\end{align}
Combining~(\ref{eqn:volume_inequality})
and~(\ref{eqn:same_asymp_ball_intersection})
with Lemma~\ref{lem:flag_simplex_in_simplex}, we get
\begin{align*}
\lim_{R\to\infty} \frac{1}{R^\thedim}
           \vol_{\mathcal{P}}\bigl(\asympball_{\mathcal{P}}(x,R) \intersection S\bigr)
   = \frac{1}{(\thedim+1)!} \assvol(\dsimplex).
\end{align*}
Using Lemma~\ref{lem:asympotic_ball}, we get from this that
\begin{align*}
\lim_{R\to\infty} \frac{1}{R^\thedim}
           \vol_{\mathcal{P}}\bigl(\ball_{\mathcal{P}}(x, R) \intersection S\bigr)
   = \frac{1}{(\thedim+1)!} \assvol(\dsimplex).
\end{align*}
But this holds for any flag simplex of the decomposition, and summing
over all the flags we get the result.
\end{proof}

\begin{proof}[Proof of Corollary~\ref{cor:smallest_asymptotic_volume}]
The first author proved in~\cite{ver2012}
that the asymptotic volume of a convex body is finite
if and only if it is a polytope.
The result follows because the simplex has fewer flags than any other polytope
of the same dimension.
\end{proof}

\begin{proof}[Proof of Corollary~\ref{cor:normed_implies_simplex}]
When one considers the Busemann volume,
the asymptotic volume of every normed space of a fixed dimension $d$
is the same, and is equal to $\assvol(\dsimplex)$
since the Hilbert geometry on a simplex is isometric to a normed space.
Hence $\assvol(\Omega) = \assvol(\dsimplex)$,
and the result follows from Corollary~\ref{cor:smallest_asymptotic_volume}.
\end{proof}

\section{A general bound on the flag complexity}
\label{sec:general_bound}

\begin{quote}
Here we prove Theorem~\ref{thm:flag_approximability_upper_bound},
that is, that the flag complexity of a $d$-dimensional convex body
is no greater than $(d-1)/2$.
\end{quote}

\newcommand\collectors{\mathcal{C}}
\newcommand\order{O}
\newcommand\witnesses{\mathcal{W}}

\renewcommand{\theenumi}{\roman{enumi}}

Our technique is to modify the proof of the main result
of Arya-da Fonseca-Mount~\cite{arya_da_fonseca_mount_journal}. In that paper, essentially the same result was
proved for the \textsl{face-approximability}, which is defined analogously
to the flag-approximability, but counting the least number of faces rather than
the least number of flags.

Their proof uses the witness-collector method.
Assume we have a set $S$ of points in $\R^\thedim$, a set $\witnesses$
of regions called \textsl{witnesses}, and a set $\collectors$ of regions called
\textsl{collectors}, satisfying the following properties.
\begin{enumerate}
\item
\label{itema}
each witness in $\witnesses$ contains a point of $S$ in its interior;
\item
\label{itemb}
any halfspace $H$ of $\R^\thedim$ either contains a witness $W\in\witnesses$,
or $H\intersection S$ is contained in a collector $C\in \collectors$;
\item
\label{itemc}
each collector $C \in \collectors$ contains some constant number of points
of $S$.
\end{enumerate}

We strengthen Lemma~4.1 of Arya-da Fonseca-Mount~\cite{arya_da_fonseca_mount_journal}.
In what follows, given a quantity $D$, any other quantity is said to be
$\order(D)$ if it is bounded from above by a multiple, depending only
on the dimension, of $D$.

\begin{lema}
\label{lem:flagcomplexity}
Given a set of witnesses and collectors satisfying the above properties,
the number of flags of the convex hull $P$ of $S$ is
$\order(|\collectors|)$.
\end{lema}

\begin{proof}
Take any facet $F$ of $P$, and let $H$ be the half-space whose intersection
with $P$ is $F$. As in the original proof, $H$ does not contain any witness,
for otherwise, by property~(\ref{itema}), it would contain a point of $S$ in its
interior. So, by~(\ref{itemb}), the intersection of $H$ and $S$ is contained in
some collector $C$.
Therefore, by~(\ref{itemc}), $F$ has at most $n$ vertices,
where $n$ is the number of points in each collector.

So, we see that each facet has at most $2^n$ faces,
and so has at most $(2^n)^\thedim$ flags,
since each flag can be written as an increasing sequence of $\thedim$ faces.

Also, the number of facets is at most $2^n |\collectors|$
since each facet has a different set of vertices, and this set is a
subset of some collector.

We deduce that the number of flags is at most
$(2^n)^{\thedim + 1} |\collectors|$.
\end{proof}

We conclude that the main theorem of~\cite{arya_da_fonseca_mount_journal} holds when
measuring complexity using flags instead of faces.

\begin{proof}[Proof of Theorem~\ref{thm:flag_approximability_upper_bound}.]
The proof follows that of the main result of~\cite{arya_da_fonseca_mount_journal},
but using Lemma~\ref{lem:flagcomplexity} above instead of Lemma~4.1 of that
paper.
\end{proof}

\section{Upper bound on the volume entropy}
\label{sec:upper_bound}

\begin{quote}
We show that the volume entropy of a convex body is no greater than twice the
flag approximability.
\end{quote}

\subsection{A uniform upper bound on the volume of a ball}

To prove the upper bound on the volume entropy, we will need to bound
the volume of balls of any radius in a polytopal Hilbert geometry
in terms of the number of flags of the polytope;
an asymptotic bound would be insufficient.
On the other hand, we will not be too concerned here with the exact dependence
on the radius---showing that it is polynomial will be enough.

We use $B(R)$ to denote the ball in a Hilbert geometry of radius $R$
and centered at $o$, and $S(R)$ to denote the boundary of this ball. We remind the reader that $\euclidean$ stands for the Euclidean unit ball.

\begin{lema}
\label{lem:new_flagareaub}
For each $\thedim\in\N$ and $0 < l \le 1$,
there exists a polynomial $p_{\thedim, l}$
of degree $\thedim$ such that the following holds.
Let $\mathcal{P}$ be a $\thedim$-dimensional polytope endowed with its Hilbert
geometry, satisfying $l.\euclidean \subset {\mathcal P} \subset \euclidean$.
Let $F$ be a facet of $\mathcal{P}$, and let $L$ be the cone with base $F$
and apex $o$.
Then,
\begin{align*}
\htvol \bigl(B(R)\intersection L \bigr) \leq p_{\thedim, l}(R)  \cardinal{\flags(F)},
\qquad\text{for all $R\geq 0$}.
\end{align*}
\end{lema}

\begin{proof} 
We will use induction on the dimension $\thedim$.
When $d=1$, there is only one Hilbert geometry, up to isometry.
In this case, $\htvol \bigl(B(R)\intersection L \bigr) = R/2$, and
$\cardinal{\flags(F)} = 2$, and so the conclusion is evident.

Assume now that the conclusion is true when the dimension is $d - 1$ and
$l$ is unchanged.

Using the co-area formula in Lemma~\ref{l:co-area_with_cones}, we get that
\begin{align*}
\frac{d}{d R} \htvol\bigl(B(R)\intersection\cone\bigr)
    \leq C \htarea\bigl(S(R)\intersection L\bigr),
\end{align*}
for some constant $C$ depending only on the dimension.

Denote the facets of $F$ by $\{F_i\}_i$. So, each $F_i$ is the closure of
a face of $\mathcal{P}$ of co-dimension $2$.
By~(\ref{eqn:flag_facet}),
\begin{align*}
\sum_i \cardinal{\flags(F_i)} = \cardinal{\flags(F)}.
\end{align*}
For each $i$, let $L_i$ be the
$\thedim-1$ dimensional cone with base $F_i$ and apex $o$.

\newcommand\asympsphere{\operatorname{AsS}}

Observe that, from Lemma~\ref{lem:asympotic_ball},
$B(R)\intersection L \subset \asympball(R) \intersection L$, for all $R\geq0$.
So, using the monotonicity of the Holmes--Thompson measure
(Lemma~\ref{l:htmonotonicity}), we get
\begin{align*}
\htarea\bigl(S(R)\intersection L\bigr)
    \leq \htarea\bigl(\asympsphere(R)\intersection L\bigr)
            + \sum_i \htarea\bigl(\asympball(R)\intersection L_i\bigr).
\end{align*}
Here $\asympsphere(R)$ is the boundary of the asymptotic ball of radius $R$
about $o$.
By the minimality of flats for the Holmes--Thompson
volume~\cite{alvarez_berck}, we have that
\begin{align*}
\htarea\bigl( \asympsphere(R)\intersection L \bigr)
    \leq \sum_i \htarea\bigl(\asympball(R)\intersection L_i \bigr).
\end{align*}

From Lemma~\ref{lem:asympotic_ball}, we have that
$\asympball(R) \subset \ball(R+c)$, where $c$ depends only on $l$.
Also, by the induction hypothesis,
\begin{align*}
\htarea\bigl(\ball(R+c) \intersection L_i\bigr)
    \leq p_{\thedim-1, l}(R+c) \cardinal{\flags(F_i)}.
\end{align*}

Putting all this together, we get that
\begin{align*}
\frac{d}{d R} \htvol(B(R)\intersection\cone)
    \leq 2 C p_{\thedim-1, l}(R + c) \cardinal{\flags(F)}.
\end{align*}
The result follows upon integrating.
\end{proof}

The two- and three-dimensional cases of the following theorem follow from 
Theorem $10$ in first author's paper~\cite{ver2014}.

\begin{theo}
\label{thm:new_growthpolytope}
For each $\thedim\in\N$ and $0 < l \le 1$,
there is a polynomial $p_{\thedim, l}$
of degree $\thedim$ such that,
for any $\thedim$-dimensional polytope ${\mathcal P}$ satisfying
$l.\euclidean \subset {\mathcal P} \subset \euclidean$,
we have
\begin{align*}
\htvol(B(R)) &\le p_{\thedim, l}(R) \cardinal{\flags(\mathcal{P})},
\qquad\text{for all $R\geq 0$}.
\end{align*}
The same result holds for the asymptotic balls.
\end{theo}

\begin{proof}
We will consider the metric balls; passing from these to the
asymptotic balls can be accomplished using Lemma~\ref{lem:asympotic_ball}.

Let $p_{\thedim, l}$ be the polynomial obtained from
Lemma~\ref{lem:new_flagareaub}. According to that lemma, for each facet $F$ of
$\mathcal{P}$ and for each $R>0$, we have
\begin{align*}
\htvol(B(R)\intersection L ) \le p_{\thedim, l}(R) \cardinal{\flags(F)},
\end{align*}
where $L$ is the cone with base $F$ and apex $o$.
Using~(\ref{eqn:flag_facet}) and summing over all the facets of
$\mathcal{P}$, we get the result.
\end{proof}

\subsection{The upper bound on the volume entropy}

\begin{figure}
\input{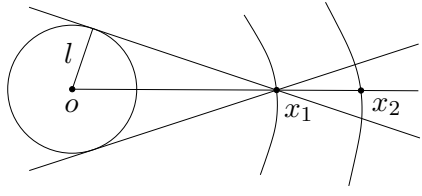}
\caption{Diagram for the proof of Lemma~\ref{lem:hausdorff_scaling}.}
\label{fig:hausdorff_scaling}
\end{figure}

\newcommand\ray{\operatorname{ray}}

\begin{lema}
\label{lem:hausdorff_scaling}
Let $\Omega_1$ and $\Omega_2$ be convex bodies within a Hausdorff distance
$\epsilon> 0$ of each other,
each containing the Euclidean ball $l\!\cdot\!\euclidean$ of radius $l > 0$
centered at the origin.
Then, $(1 / \lambda) \Omega_2 \subset \Omega_1$, with
$\lambda := 1 + \epsilon / l$.
\end{lema}

\begin{proof}
Consider a ray emanating from the origin, and let $x_1$ and $x_2$
be the intersections of this ray with the boundaries of $\Omega_1$ and
$\Omega_2$, respectively.
Let $l_1$ and $l_2$ be the distances from the origin to $x_1$ and $x_2$,
respectively, and suppose that $l_2 > l_1$.
Define the cone
\begin{align*}
  F := \bigl\{ x_1 + \alpha (x_1 - z)
    \mid \text{$\alpha > 0$ and $z \in l\!\cdot\! \euclidean$} \bigr\}.
\end{align*}
See Figure~\ref{fig:hausdorff_scaling}.
No point in the interior of the cone $F$ can be in $\Omega_1$.
However, the distance from $x_2$ to $\Omega_1$ is no greater than $\epsilon$.
This implies that the ball of radius $\epsilon$ around $x_2$ is not contained
in the interior of $F$. Looking at the sine of the angle subtended by $F$
at $x_1$, we see that $l / l_1 \leq \epsilon / (l_2 - l_1)$.
We deduce that
\begin{align*}
\frac{l_2 }{ l_1}
    = 1 + \frac{l_2 - l_1}{l_1}
    \leq 1 + \frac{\epsilon}{l}.
\end{align*}
The conclusion follows.
\end{proof}

\begin{lema}
\label{lem:upper_bound}
Let $\Omega$ be a convex body in $\R^\thedim$.
The volume entropy of $\Omega$ is no greater than twice its
flag approximability, that is,
\begin{equation*}
\ent(\Omega) \leq 2\flagapprox(\Omega).
\end{equation*}
\end{lema}

\begin{proof}
Without loss of generality, we may assume that $\Omega$ is in canonical form.
Let $R > 0$, and let $\epsilon > 0$ be such that $-2R = \log\epsilon$.
Let $P^*$ be a polytope approximating $\Omega$ within Hausdorff distance
$\epsilon$, having the least possible number $\flagnumber(\epsilon, \Omega)$
of maximal flags.
Write $\shrunkP := (1/\lambda) P^*$, where $\lambda := 1 + 2 \thedim \epsilon$.
When $\epsilon$ is small enough, both $\Omega$ and $P^*$
contain $(1/2d)\euclidean$, and so, by Lemma~\ref{lem:hausdorff_scaling},
\begin{align}
\label{eqn:shrink_factor}
(1/ \lambda^2) \Omega \subset \shrunkP \subset \Omega.
\end{align}

We will henceforth assume that $\epsilon$ is small enough
for this to be the case, and for $P$ to contain $(1/4d)\euclidean$.
Since $\Omega$ is in normal form, this implies that $\shrunkP$
satisfies the assumptions of Theorem~\ref{thm:new_growthpolytope},
with $l = 1/4d$.
Therefore, there exists a polynomial $p_\thedim$ of degree $\thedim$,
depending only on the dimension $\thedim$, such that
\begin{equation*}
\htvol_{\shrunkP}\bigl(\asympball_{\shrunkP}(o, R)\bigr)
    \leq \flagnumber(\epsilon, \Omega) p_\thedim(R).
\end{equation*}

From~(\ref{eqn:shrink_factor}),
\begin{equation*}
\htvol_{\Omega}(\cdot) \leq \htvol_{\shrunkP}(\cdot).
\end{equation*}

Observe that $((1 - \epsilon) / \lambda^2) \Omega$ is the asymptotic ball
of $\Omega$ of radius $R'$, where $- 2 R' = \log \epsilon'$,
with $1 - \epsilon' = (1 - \epsilon) / \lambda^2$.
Also, the asymptotic ball of $\shrunkP$ of radius $R$ is
$(1-\epsilon) \shrunkP$.
So, according to~(\ref{eqn:shrink_factor}),
\begin{equation*}
\asympball_\Omega(o, R') \subset \asympball_{\shrunkP}(o, R).
\end{equation*}

Finally, Lemma~\ref{lem:asympotic_ball} gives that
$\ball_\Omega(o, R') \subset \asympball_\Omega(o, R')$.

Putting all this together, we conclude that
\begin{equation*}
\frac{1}{R'} \log \htvol_{\Omega}\big(\ball_\Omega(o, R')\big) 
    \leq 2\frac{\log \big( \flagnumber(\epsilon, \Omega) p_d(R) \big)}
               {-\log\epsilon'}.
\end{equation*}
We now take the limit infimum as $R$ tends to infinity, in which case
$R'$ also tends to infinity, and $\epsilon$ and $\epsilon'$ tend to zero.
A simple calculation shows that, in this limit,
the ratio $\epsilon' / \epsilon$ converges to $2d + 1$. The result follows.
\end{proof}

\section{Lower bound on the volume entropy}

\begin{quote}
We show that the volume entropy of a convex body is no less than twice the
flag approximability.
\end{quote}

\begin{lema}
\label{lem:lower_bound}
Let $\Omega$ be a convex body in $\R^\thedim$.
Then, $2\flagapprox(\Omega) \leq \ent(\Omega)$.
\end{lema}

Our proof will be a modification of the method
used in Arya-da Fonseca-Mount~\cite{arya_da_fonseca_mount_journal}.
We start with a lemma concerning the centroid of a convex body, otherwise
known as its barycenter or center of mass.

\begin{lema}
\label{lem:bound_on_centroid}
Let $D$ be a convex body in $\R^\thedim$.
Let $p \in \partial D$ and $q\in D$ be such that the centroid $x$ of $D$
lies on the line segment $[pq]$. Then, $|px| \geq |pq|/(\thedim + 1)$.
\end{lema}

\begin{proof}
Let $h$ be a hyperplane tangent to $D$ at $p$.
The ratio $|px| / |pq|$ is minimized when $D$ is a simplex with a vertex at $q$
and all the other vertices on $h$.
\end{proof}

Recall the following definitions.
A \textsl{cap} $C$ of a convex body $\Omega$ is a non-empty intersection of $\Omega$
with a closed halfspace $H$. The \textsl{base} of the cap $C$
is the intersection of $\Omega$ with the hyperplane $h$ that bounds the halfspace.
An \textsl{apex} of $C$ is a point of $C$ of maximum distance from $h$.
Thus, the apexes of $C$ all lie in a hyperplane tangent to $\Omega$
and parallel to $h$.
The width of the cap is the distance from any apex to $h$.

Let $\Omega$ be a convex body containing the origin $o$ in its interior.
Consider the ray emanating from $o$ and passing through another point $x$.
We define the \textsl{ray-distance} $\ray(x)$ to be the distance from $x$
to the point where this ray intersects $\partial \Omega$.

\begin{lema}
\label{lem:distance_to_boundary}
Let $\Omega\subset \R^d$ be a convex body in canonical form.
Let $x$ be the centroid of the base of a cap of width $\epsilon$ of $\Omega$.
Then, the ray-distance $\ray(x)$ is greater than $C' \epsilon$,
for some constant $C'>0$ depending only on the dimension $\thedim$.
\end{lema}

\begin{proof}
Let $C$ be a cap of width $\epsilon$, and let $x$ be the centroid of its
base $D$. Let $z$ be an apex of $C$. So, $z$ is at distance $\epsilon$ from $h$,
the hyperplane defining the cap.

Consider the 2-plane $\Pi$ containing the points $o$, $x$, and $z$.
(If these points are collinear, then take $\Pi$ to be any 2-plane containing
them.)

The intersection of $D$ with $\Pi$ is a line segment.
Let $p$ and $q$ be the endpoints of this line segment.
Label them in such a way that the ray $ox$ intersects the line segment $pz$
at a point $w$.
See Figure~\ref{fig:distance_to_boundary}.
Think of $D$ as a convex body in $h$.
We get from Lemma~\ref{lem:bound_on_centroid}
that $|px| \geq |pq| / d$, since $x$ is the centroid of $D$.

\begin{figure}
\input{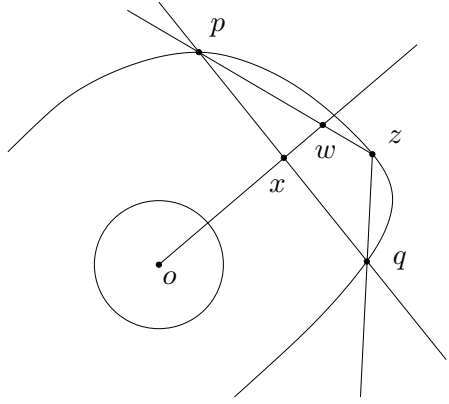}
\caption{Diagram for the proof of Lemma~\ref{lem:distance_to_boundary}.}
\label{fig:distance_to_boundary}
\end{figure}

We consider separately the cases where the angle $\angle pzq$ is acute and
where it is not.

\textsl{Case} $\angle pzq \leq \pi/2$.
Since $z$ is at distance at most $1$ from the origin, and
$\Omega$ contains the Euclidean ball $(1/d) \euclidean$,
the angle $\angle pzq$ must be at least
$A := 2\arcsin(1/\thedim)$. 
In the present case, this implies that $\sin \angle pzq$ is at least $\sin A$.
Observe that $|zq| \geq \epsilon$.
Two applications of the sine rule give
\begin{align*}
|xw| = |zq| \frac{|px|}{|pq|} \frac{\sin\angle pzq}{\sin \angle pwx}.
\end{align*}
We deduce that $|xw| \geq \epsilon \sin (A)/ d$

\textsl{Case} $\angle pzq \geq \pi/2$.
In this case there is a point $y$ between $p$ and $q$ such that
$\angle pzy = \pi/2$. Drop the perpendicular from $x$ to the line $pz$
to get a point $w'$ such that $\angle pw'x = \pi/2$.
Using similarity of triangles, we get
\begin{align*}
|xw|
    \geq |xw'| = \frac{|px| |yz|}{|py|}
    \geq \frac{|px| |yz|}{|pq|}
    \geq \frac{\epsilon} {d}.
\end{align*}

In both cases we have shown that $|xw|$ is at least $\epsilon$ times some
constant depending on the dimension.
The conclusion follows since $\ray(x) \geq |xw|$.
\end{proof}

The following is part of Theorem~2 of~\cite{ver2012}.

\begin{lema}
\label{lem:minimum_volume_ball}
For each dimension $d$, there is a constant $c$ such that
\begin{align*}
\htvol_\Omega\bigl(\ball_\Omega(x,R)\bigr) \ge c R^d,
\end{align*}
for each convex body $\Omega$, point $x\in\interior\Omega$, and radius $R>0$.
\end{lema}

Let $\Omega$ be a convex body containing a point $x$ in its interior.
The \textsl{Macbeath region} about $x$  is defined to be
\begin{align*}
M'(x) := x + \Bigl( \frac{1}{5} (\Omega - x) \intersection \frac{1}{5} (x - \Omega) \Bigr).
\end{align*}

Macbeath regions are related to balls of the Hilbert geometry as follows.

\begin{lema}
\label{lem:macbeath_vs_hilbert}
The Macbeath region $M'(x)$ about any point $x$ satisfies
\begin{align*}
\ball\Bigl(x, \frac{1}{2} \log \frac{6}{5}\Bigr)
    \subset M'(x)
    \subset \ball\Bigl(x, \frac{1}{2} \log \frac{3}{2}\Bigr).
\end{align*}
\end{lema}

\newcommand\dfunk{d_F}

\begin{proof}
Recall that the Funk distance between two points $p$ and $q$
is defined to be
\begin{align*}
\dfunk(p, q) := \log \frac{|pb|}{|qb|},
\end{align*}
where $b$ is as in the definition of the Hilbert metric
in section~\ref{sec:preliminaries}.
The Funk metric is not actually a metric since it is not symmetric.
Its symmetrisation is the Hilbert metric:
$d_{\Omega}(p,q) = (\dfunk(p,q) + \dfunk(q,p)) / 2$.

One can show that a point $y$ is in $M'(x)$ if and only if both
$\dfunk(x, y) \leq \log(5/4)$ and $\dfunk(y,x) \leq \log(6/5)$.
The conclusion follows.
\end{proof}

The following is a modification of
Lemma~3.2 of~\cite{arya_da_fonseca_mount_journal}.
The assumptions are the same; all that has changed is the bound on the number
of caps. The original bound was $\order(1/\delta^{(d-1)/2})$.

\begin{lema}
\label{lem:bound_on_number_of_caps}
Let $\Omega\subset R^\thedim$ be a convex body in canonical form.
Let $0 < \delta \leq \Delta_0 / 2$, where $\Delta_0$ is a certain constant
(see \cite{arya_da_fonseca_mount_journal}).
Let $\mathcal{C}$ be a set of caps each of width~$\delta$, such that the
Macbeath  regions $M'(x)$ centered at the centroids $x$ of the bases of these
caps are disjoint. Then,
\begin{align*}
|\mathcal{C}| = O\Bigl(\htvol\bigl(\asympball(o, R)\bigr)\Bigr),
\end{align*}
where $2R := -\log C \delta$, and $C$ is a constant depending only on the
dimension.
\end{lema}

\begin{proof}
Let $x$ be the centroid of the base of one of the caps in $\mathcal{C}$.
By Lemma~\ref{lem:distance_to_boundary}, the ray-distance satisfies
$\ray(x) \ge C' \delta$, where $C'$ is the constant appearing in that lemma.
Since $\Omega$ is contained in the unit ball,
this implies that $x\in\asympball(R')$, where $2 R' = - \log C' \delta$.
So, using Lemma~\ref{lem:asympotic_ball}, Lemma~\ref{lem:macbeath_vs_hilbert},
and Lemma~\ref{lem:asympotic_ball} again, we get that
the Macbeath region $M'(x)$ is contained within $\asympball(R)$,
where $2R = - \log C \delta$, with $C$ being some constant depending on the
dimension.

Combining Lemmas~\ref{lem:minimum_volume_ball}
and~\ref{lem:macbeath_vs_hilbert},
we get that there is a constant $C_1$ such that each Macbeath region $M'(x)$
has volume at least $C_1$.
A volume argument now gives that
$|\mathcal{C}| C_1 \leq \htvol(\asympball(R))$.
\end{proof}

We can now prove the lower bound on the volume entropy.

\begin{proof}[Proof of Lemma~\ref{lem:lower_bound}]
We may assume without loss of generality that $\Omega$ is in canonical form.

We follow the method of~\cite{arya_da_fonseca_mount_journal},
but using the bound in Lemma~\ref{lem:bound_on_number_of_caps}
on the number of non-intersecting Macbeath regions,
rather than that in Lemma~3.2 of~\cite{arya_da_fonseca_mount_journal}.
Given an $\epsilon >0$, this method produces a set of points $S$,
a set $\witnesses$ of witnesses, and a set $\collectors$ of collectors
satisfying the assumptions in section~\ref{sec:general_bound},
such that the convex hull of $S$ is an $\epsilon$-approximation of $\Omega$.
Furthermore, Lemma~\ref{lem:bound_on_number_of_caps} leads to the following
bound on the number of collectors:
\begin{align*}
|\collectors| \leq
\htvol\bigl(\asympball(R)\bigr) / C_1,
\end{align*}
where $2R := -\log C \delta$ and
$\delta := c_1\epsilon/\bigl(\beta\log(1/\epsilon)\bigr)$, for some constant
$c_1$ depending only on the dimension.

Since we are concerned with the flag-approximability, we must,
just as in the proof of Theorem~\ref{thm:flag_approximability_upper_bound},
use Lemma~\ref{lem:flagcomplexity} from section~\ref{sec:general_bound}
instead of Lemma~4.1 of~\cite{arya_da_fonseca_mount_journal}.
We get that the number $\flagnumber(\epsilon, \Omega)$ of flags
in the approximating polytope
is at most a fixed multiple $C_3 |\collectors|$ of $|\collectors|$.

Now let $\epsilon$ tend to zero.
Observe that $\log \delta / \log \epsilon$ converges to $1$.
So,
\begin{align*}
\flagapprox(\Omega)
     &= \liminf_{\epsilon\to 0} \frac{\log \flagnumber(\epsilon, \Omega)}
                                     {-\log \epsilon} \\
     &\leq \liminf_{R\to\infty}
        \frac{\log\big((C_3/C_1)
            \htvol(\asympball(R))\big)}{2R + \log C} \\
    &= \frac{1}{2} \ent(\Omega).
\qedhere
\end{align*}
\end{proof}

The proof of the main result of the paper is now complete.

\begin{proof}[Proof of Theorem~\ref{thm:entropy_flag_approximabilty}]
We combine Lemmas~\ref{lem:upper_bound} and~\ref{lem:lower_bound}.
\end{proof}

\bibliographystyle{plain}
\bibliography{enthilb}

\end{document}